\def\cD{\mathcal{D}}
\let\hat=\widehat
\let\tilde=\widetilde
\newfont{\bb}{msbm10 at 12pt}
\titleformat{\subsection}[runin]
{\bfseries} {\thesubsection{.}}{0.15cm}{}[.]
\titleformat{\subsubsection}[runin]
{\em}{\thesubsubsection{.}}{0.15cm}{}[.]
\newtheorem{theorem}{Theorem}[section]
\newtheorem{lemma}[theorem]{Lemma}
\newtheorem{proposition}[theorem]{Proposition}
\newtheorem{remark}[theorem]{Remark}
\newtheorem{definition}[theorem]{Definition}
\theoremstyle{definition}
\numberwithin{equation}{section}
\numberwithin{figure}{section}
\begin{document}
\fancyhead[LO]{Free boundary saddle disks}
\fancyhead[RE]{Alberto Cerezo}
\fancyhead[RO,LE]{\thepage}

\thispagestyle{empty}

\begin{center}
{\bf \LARGE Free boundary saddle disks in the unit ball}

\vspace*{5mm}

\hspace{0.2cm} {\Large Alberto Cerezo}
\end{center}

\vspace{0.5cm}


\footnote[0]{
\noindent \emph{Mathematics Subject Classification}: 53A10, 53C42. \\ \mbox{} \hspace{0.25cm} \emph{Keywords}: saddle surfaces, free boundary}

\vspace*{7mm}

\begin{quote}
{\small
\noindent {\bf Abstract.}\hspace*{0.1cm}
We construct an infinite family of non-planar free boundary disks of non-positive Gaussian curvature in the unit ball of $\mathbb{R}^3$.
\vspace*{0.1cm}
}
\end{quote}

\section{Introduction}

A classical result of Almgren \cite{A}, also known as Calabi-Almgren theorem, asserts that any minimal 2-sphere in $\mathbb{S}^3$ must be a totally geodesic equator; see also \cite{C}. Almgren's proof relies on the use of the Hopf differential, a holomorphic quadratic differential associated to any constant mean curvature immersion in a space form, which vanishes precisely when the surface is totally umbilical.

The problem of understanding to what extent Almgren's uniqueness theorem extends to more general classes of surfaces was recently studied by Gálvez, Mira and Tassi \cite{GMT}. More specifically, they proved that if $\Sigma \subset \mathbb{S}^3$ is an analytic {\em saddle} sphere in $\mathbb{S}^3$, then $\Sigma$ must be a totally geodesic equator. The {\em saddle} condition here means that the product $\kappa_1\kappa_2$ of the principal curvatures of $\Sigma$ is non-positive. They also constructed $\mathcal{C}^\infty$ saddle spheres in $\mathbb{S}^3$ different from equators, showing that the analyticity hypothesis is necessary to ensure uniqueness. Other examples of non-analytic saddle spheres were previously constructed by Martinez-Maure \cite{MM} and Panina \cite{P}.

In the context of surfaces with boundary, the natural analogue of Almgren's theorem is due to Nitsche \cite{N}, who proved in 1985 that any free boundary minimal disk in the Euclidean unit ball $\mathbb{B}^3$ must be equatorial; see Figure \ref{fig:FBdisk}. We recall that a compact orientable surface $\Sigma$ with boundary $\partial \Sigma$ immersed in $\mathbb{B}^3$ is said to be \emph{free boundary} in the unit ball if $\partial \Sigma \subset \partial \mathbb{B}^3$ and $\Sigma$ meets $\partial \mathbb{B}^3$ orthogonally along $\partial \Sigma$. This condition appears naturally when one considers critical points for the area functional among all surfaces $\Sigma$ with boundary in $\partial \mathbb{B}^3$. 

\begin{figure}[ht]
\centering
\includegraphics[width=0.4\textwidth]{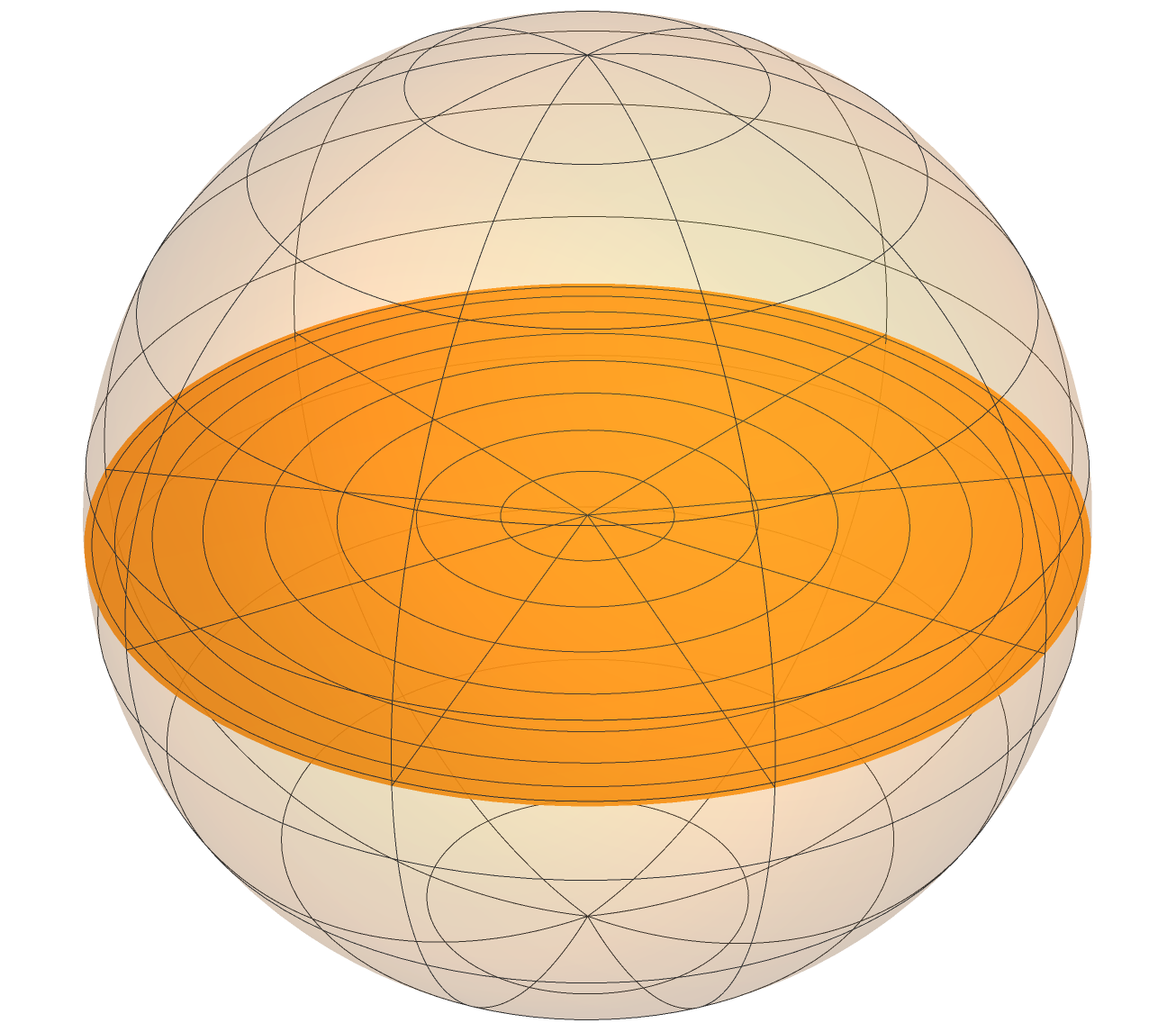}
\caption{Equatorial disk in $\mathbb{B}^3$.}
\label{fig:FBdisk}
\end{figure}

The idea behind Nitsche's theorem can be viewed as the free boundary version of Almgren’s uniqueness result. Indeed, assume by contradiction that $\Sigma$ is a free boundary minimal disk which is not totally geodesic, and consider the pair of line fields $L_1,L_2$ associated to the set of principal directions of the surface. These line fields are defined on the set of non-umbilical points of $\Sigma$. Since the Hopf differential of $\Sigma$ is holomorphic, it follows that the set of umbilical points of $\Sigma$ is discrete, and the index of the line fields is negative at each such point. On the other hand, the free boundary condition implies that at each non-umbilical point of $\partial \Sigma$, one of the line fields $L_1,L_2$ is tangent to $\partial \Sigma$ and the other one is orthogonal. We may then reflect $L_1,L_2$ by symmetry along the boundary. This implies the existence of a pair of line fields on a 2-sphere with a discrete set of singularities of negative index. This contradicts Poincaré-Hopf theorem \cite{Hop}. Hence, any free boundary disk $\Sigma$ in $\mathbb{B}^3$ must be totally geodesic.

It is possible to extend Nitsche's uniqueness result to broader classes of surfaces. Indeed, one can show that any free boundary disk $\Sigma$ modelled by a geometric PDE of the type
\begin{equation}\label{eq:Weingarten}
    W(\kappa_1,\kappa_2) = 0, \; \; \; \text{ with }  \; \; \; W(0,0) = 0  \; \; \;  \text{ and }  \; \; \;  W_{\kappa_1}W_{\kappa_2} > 0
\end{equation}
for some differentiable symmetric function $W(x,y)$ (i.e., $W(x,y) = W(y,x)$) must be totally geodesic. The surfaces satisfying \eqref{eq:Weingarten} are also known as {\em special elliptic Weingarten surfaces of minimal type}; see for example \cite{FGM,FM,RSa,SaT}. We note that minimal surfaces are special Weingarten surfaces in the particular case $W(\kappa_1,\kappa_2) = \kappa_1 + \kappa_2$. We also remark that every surface satisfying \eqref{eq:Weingarten} must be saddle.

In view of these results, it is natural to ponder whether a free boundary analogue of the result by Gálvez-Mira-Tassi \cite{GMT} holds. Specifically, we ask the following:

\begin{enumerate}[(1)]
\item Is every free boundary analytic saddle disk in $\mathbb{B}^3$ equatorial?
\item Are there non-equatorial free boundary saddle disks in $\mathbb{B}^3$ of class $\mathcal{C}^\infty$?
\end{enumerate}

An affirmative answer to both these problems would provide a sharp geometric extension of Nitsche's uniqueness theorem. In other words, we could view the class of free boundary analytic saddle disks as the {\em largest} family of surfaces in which Nitsche's result holds.

Regarding the uniqueness in the analytic case, the results obtained in \cite{GMT} could allow us to control the index of the line fields associated to the principal directions at the interior umbilical points of a free boundary saddle disk. However, the analysis of the index on the boundary is substantially more involved, and new techniques would be needed to deal with this situation.

On the other hand, the problem of whether there exist non-equatorial free boundary saddle disks $\Sigma$ in $\mathbb{B}^3$ of class $\mathcal{C}^\infty$ presents several difficulties. The examples constructed in \cite{GMT,MM,P} do not admit a natural free boundary analogue. We also note that any free boundary saddle disk must satisfy certain geometric restrictions. For example, by the Poincaré–Hopf theorem they must have umbilical points, but an analysis of the index at such points determines that this set cannot just consist of isolated umbilical points in the interior of $\Sigma$; see \cite{Voss}. In other words, any free boundary saddle disk should have umbilical points with positive index in the boundary or a non-discrete set of interior umbilical points. We prove the following:

\begin{theorem}\label{thm:main}
There exist infinitely many $\mathcal{C}^\infty$ free boundary saddle disks in the unit ball $\mathbb{B}^3$ which are not equatorial.
\end{theorem}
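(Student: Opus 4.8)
The plan is to build the examples by hand, reducing the problem — on the parts of $\Sigma$ that are graphs — to an overdetermined elliptic problem, and two elementary observations drive everything. First, if a portion of $\Sigma$ is written as a graph $z=f(x,y)$ over a planar domain, then $K=\det(\mathrm{Hess}\,f)/(1+|\nabla f|^2)^2$, so being saddle is \emph{exactly} the pointwise inequality $\det(\mathrm{Hess}\,f)\le 0$; and if the boundary of the graph lies on $\mathbb{S}^2$, demanding that the tangent plane contain the position vector shows the free boundary condition amounts to $x^2+y^2+f^2=1$ together with $x f_x+y f_y=f$ along that boundary. Second, a graph over the \emph{whole} equatorial disk must have $f\equiv 0$ on the unit circle in order to stay in $\mathbb{B}^3$, so $x f_x+y f_y=f$ becomes $\partial_\nu f=0$ there; since a function with $\det(\mathrm{Hess}\,f)\le 0$ obeys the maximum principle (a saddle graph over a domain attains its height extrema on the boundary), such an $f$ must vanish identically. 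Hence a non-equatorial example cannot be a graph over the equatorial disk, and — consistently with the dichotomy recalled in the introduction — it must either carry a non-discrete set of interior planar points or have planar points of positive index on the boundary. I would go for examples of the first kind.

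Concretely I would look for $\Sigma$ consisting of a \emph{flat core}, a piece of the equatorial plane, glued with infinite-order contact along part of its boundary to a \emph{flange} that is the graph of a function $f$ which vanishes to infinite order at the gluing locus and bends up to meet $\mathbb{S}^2$ orthogonally along the rest of $\partial\Sigma$. The saddle inequality forbids a \emph{circular} gluing curve: one checks directly that for a radial profile $f=g(r)\cos k\theta$ attached to a flat disk, $\det(\mathrm{Hess}\,f)\le 0$ fails immediately outside the core. But across a \emph{straight} segment the obstruction disappears: there exist genuine saddle functions that are flat on a half-plane, e.g. $f(x,y)=e^{-1/x}\sqrt{1+y}$ for $x>0$ (extended by $0$ for $x\le 0$), for which a short computation gives $\det(\mathrm{Hess}\,f)=-\tfrac{1-x}{2x^4(1+y)}e^{-2/x}<0$ on $\{0<x<1,\ y>-1\}$. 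Taking such a model, rescaled by a small amplitude $\varepsilon$ and composed with a small corrector supported in the flange, the region $\Omega=\{x^2+y^2+f^2\le 1\}$ is a slight dent of the unit disk, and it remains to enforce $x f_x+y f_y=f$ along $\partial\Omega$. This is a one-phase free boundary problem whose linearization at $\varepsilon=0$ (the equatorial disk) can be solved, so for $\varepsilon$ small an implicit function argument yields a solution, with $\Sigma\subset\mathbb{B}^3$ automatic and $\Sigma$ non-equatorial because the flange is not flat. Varying $\varepsilon$ (and the chord defining the core, or attaching several flanges) then produces infinitely many pairwise non-congruent $\mathcal{C}^\infty$ free boundary disks with $\kappa_1\kappa_2\le 0$, proving the theorem.

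The hard part will be running the perturbation while \emph{preserving} the inequality $\det(\mathrm{Hess}\,f)\le 0$. The free boundary problem prescribes both Dirichlet- and Neumann-type data on $\partial\Omega$, so it is formally overdetermined and one is forced to treat the shape of $\partial\Omega$ itself as an unknown; at the same time $\det(\mathrm{Hess})$ is quadratic in the Hessian and its sign is not preserved under arbitrary perturbations, and — as the failure of the circular ansatz shows — it is especially rigid in the transition region between the core and the flange, where $\mathrm{Hess}\,f$ is no longer close to that of a harmonic function and where the negative quantity $\det(\mathrm{Hess}\,f_0)$ of the model degenerates. The crux is therefore to design the model flange and the corrector so that the strictly negative contribution of the model dominates, in the relevant quadratic sense, the error introduced by the corrector and the cutoff; I expect essentially all of the work of the proof to lie here.
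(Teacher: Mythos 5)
Your proposal takes a genuinely different route from the paper, and the route has a gap at precisely the step you flag yourself.

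The paper avoids the free boundary problem entirely. It writes the surface as a union of circular arcs $c_{\gamma(u)}$ that meet $\partial\mathbb{B}^3$ orthogonally \emph{by construction}, parametrized over a planar ``ribbon'' curve $\gamma$; the free boundary condition then holds automatically (Proposition~\ref{pro:psiFB}), and the only thing left to check is the saddle inequality. That in turn is verified by exhibiting two transversal foliations of the curved part of the surface with normal curvatures of opposite sign: the vertical circles (Proposition~\ref{pro:curvaturapositiva}) and, after shrinking the ribbon by a homothety $\gamma\mapsto\gamma_t$ with $t$ small, the horizontal level curves (Proposition~\ref{pro:curvaturanegativa}). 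The shrinking is the mechanism that makes the surface close to planar and lets one \emph{prove} the sign of the second foliation. Your plan instead poses the overdetermined free boundary problem for a graph $z=f$, chooses a model flange with $\det(\mathrm{Hess}\,f_0)<0$, and hopes to correct it by an implicit function argument. The two approaches aim at the same kind of example (flat core plus non-analytic curved flange with infinite-order contact), but the paper's parametrized construction replaces your perturbation step by an explicit computation.

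The gap is the one you name in your last paragraph, and it is not a side issue: it is the entire theorem. You have set up the problem, but the step ``for $\varepsilon$ small an implicit function argument yields a solution'' is asserted, not proved, and there are two concrete obstructions. First, even granting solvability of the linearized free boundary problem, nothing in the argument controls the sign of $\det(\mathrm{Hess})$ after correction. Near the gluing chord $\{x=0\}$ the model has $\det(\mathrm{Hess}\,f_0)=-\tfrac{(1-x)}{2x^4(1+y)}e^{-2/x}$, which vanishes to infinite order; a corrector produced by an implicit function theorem in an ordinary $\mathcal C^{k,\alpha}$ (or Sobolev) scale has no reason to be flat to infinite order at that line, and its Hessian will dominate $\mathrm{Hess}\,f_0$ there, with uncontrolled sign of the determinant. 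You would need to build a function space in which the corrector is \emph{forced} to vanish to infinite order along the chord and in which the linearized operator is still invertible; this is not a routine application of the implicit function theorem. Second, the linearization is taken at $\varepsilon=0$, i.e.\ at the flat equatorial disk, which is exactly the degenerate case $\det(\mathrm{Hess})\equiv 0$; the relevant overdetermined operator there has a kernel coming from the rotations of the equatorial disk, and it is not obvious that the cokernel is trivial once the boundary is treated as an unknown. Until you produce (i) the precise functional setting, (ii) the invertibility of the linearized free boundary map, and (iii) a quantitative estimate showing $\det(\mathrm{Hess})\le 0$ survives the correction uniformly up to the gluing locus, this is a plausible strategy but not a proof. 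The paper's ribbon-and-circles construction is exactly a way of doing (iii) by hand while making (i)--(ii) moot, and if you want to rescue your approach you will likely end up rediscovering something equivalent to its homothety parameter $t$.
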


\begin{figure}[ht]
\centering
\includegraphics[width=0.4\textwidth]{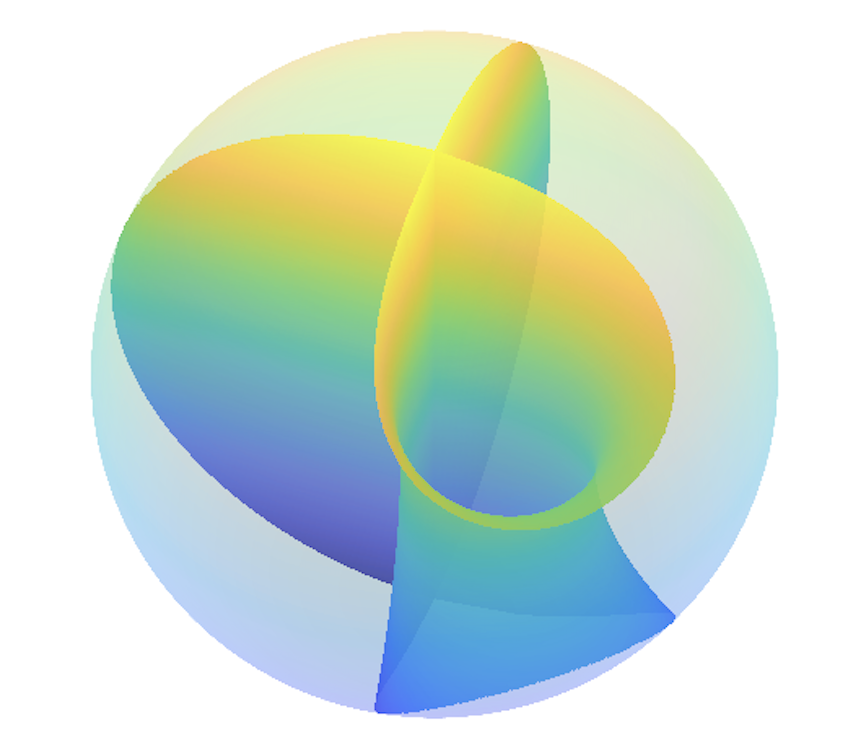}
\caption{One of the free boundary saddle disks in Theorem \ref{thm:main}.}
\label{fig:psigamma}
\end{figure}

We emphasize that the constructed examples are never analytic. We do not know whether analytic free boundary saddle disks other than the equatorial ones exist. In light of the results by Gálvez, Mira and Tassi \cite{GMT}, it is natural to conjecture that uniqueness holds when we restrict to the family of analytic examples. We also note that all the disks obtained in Theorem \ref{thm:main} possess self-intersections; see Figure \ref{fig:psigamma}. It remains an open question whether embedded free boundary saddle disks in $\mathbb{B}^3$ must necessarily be equatorial, regardless of their regularity. Finally, since the examples constructed here contain planar pieces, we also ask whether every free boundary saddle disk in $\mathbb{B}^3$ must have one such portion.

We briefly outline the structure of the paper. In Section \ref{sec:2}, we will construct a family of free boundary disks in $\mathbb{B}^3$. These surfaces are foliated by circles, and all of them contain a ribbon-shaped planar curve $\gamma$; see Definition \ref{def:ribbon}. Later on, in Section \ref{sec:saddledisks}, we will show that, under additional hypotheses on the ribbon curve $\gamma$, the associated free boundary disks are saddle.

\subsection*{Acknowledgements}
This research has been financially supported by Grants PID2020-118137GB-I00 and PID2024-160586NB-I00 funded by MICIU/AEI/10.13039/501100011033 and by ESF+.

\section{Construction of free boundary disks in \texorpdfstring{$\mathbb{B}^3$}{}}\label{sec:2}
We will introduce a method to construct a free boundary immersion $\psi(u,v)$ of a disk in the unit ball $\mathbb{B}^3 \subset \mathbb{R}^3$ in terms of a certain ribbon-shaped planar curve $\gamma(u)$, which we describe next. In what follows, let $\mathbb{D}^2 = \mathbb{B}^3 \cap \{x_3 = 0\}$ denote the horizontal unit disk in $\mathbb{B}^3$.

\begin{definition}\label{def:ribbon}
    Let $\gamma$ be a planar regular curve in $\mathbb{D}^2$. We say that $\gamma$ is a {\em ribbon} if, up to an isometry, it admits a parametrization $\gamma(u):[-u_3,u_3] \to \mathbb{D}^2$, where $u$ is the arc-length parameter, satisfying the following properties:
    \begin{enumerate}
    \item $\Psi(\gamma(u)) = \gamma(-u)$, where $\Psi$ denotes the symmetry with respect to the line $\{x = y \}$. In particular, $\gamma(0)$ belongs to this line.
    \item There exist $a \in (0,1)$ and $u_2 \geq 0$ such that, on the interval $[u_2,u_3]$, $\gamma(u)$ parametrizes the segment $\{0\}\times [-1,a] \times \{0\}$, with $\gamma(u_3) = (0,-1,0)$. In particular, $u_2 = u_3 - (1+a) > 0.$ Consequently, on the interval $[-u_3,-u_2]$, $\gamma(u)$ parametrizes the segment $[-1,a]\times \{0\} \times \{0\}$, and $\gamma(-u_3) = (-1,0,0)$.
    \item On the interval $(-u_2,u_2)$, $\gamma(u)$ is an embedded, strictly convex curve contained in the open planar quadrant $\left((0,1)\times(0,1)\times \{0\}\right) \cap \mathbb{D}^2$; see Figure \ref{fig:gamma}. The rotation index of the curve is thus $3/4$. 
    \item The curvature $\kappa(u)$ of $\gamma(u)$ is strictly decreasing on the interval $(u_1,u_2)$ for some $u_1 \in (0,u_2)$. By symmetry, $\kappa(u)$ is increasing for $u \in (-u_2,-u_1)$.
    \end{enumerate}
\end{definition}

\begin{figure}[ht]
\centering
\includegraphics[width=0.4\textwidth]{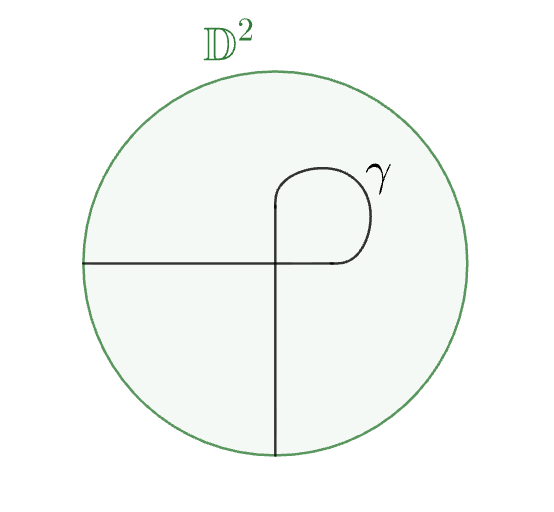}
\caption{Example of a ribbon $\gamma$.}
\label{fig:gamma}
\end{figure}

\begin{remark}
    There exist infinitely many $\mathcal{C}^\infty$ curves in $\mathbb{D}^2$ satisfying the properties listed above. However, none of these curves is analytic.
\end{remark}

\begin{lemma}\label{lem:linindep}
The vectors $\gamma(u)$ and $\gamma'(u)$ are linearly independent for $u \in (-u_2,u_2)$.
\end{lemma}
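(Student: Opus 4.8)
The plan is to argue by contradiction, using that for $u\in(-u_2,u_2)$ the curve $\gamma$ sits in the open first quadrant while, once suitably closed up, the arc $\gamma|_{[-u_2,u_2]}$ bounds a convex region. \textbf{Step 1 (Reformulation).} For $u\in(-u_2,u_2)$, property (3) of Definition \ref{def:ribbon} gives $\gamma(u)\in(0,1)\times(0,1)\times\{0\}$, so $\gamma(u)\neq 0$; and $\gamma'(u)\neq 0$ since $u$ is an arc-length parameter. Hence $\gamma(u)$ and $\gamma'(u)$ are linearly dependent precisely when they are parallel, i.e.\ when the affine tangent line $\ell_u$ to $\gamma$ at $\gamma(u)$ passes through the origin. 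It therefore suffices to show that $0\notin\ell_u$ for every $u\in(-u_2,u_2)$.

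\textbf{Step 2 (A convex region).} From properties (1)–(3) together with the segment parametrizations, the arc $\gamma|_{[-u_2,u_2]}$ is embedded, runs from $\gamma(-u_2)=(a,0,0)$ with $\gamma'(-u_2)=(1,0,0)$ to $\gamma(u_2)=(0,a,0)$ with $\gamma'(u_2)=(0,-1,0)$, and turns monotonically to the left by $\tfrac34\cdot 2\pi=\tfrac{3\pi}{2}$. Let $\tilde\gamma$ be the closed curve obtained by following this arc, then the segment from $(0,a,0)$ to $(0,0,0)$, then the segment from $(0,0,0)$ to $(a,0,0)$. Since the arc lies in the open quadrant it meets the coordinate axes only at its two endpoints, so $\tilde\gamma$ is a simple closed piecewise-$C^1$ curve; its total turning equals $\tfrac{3\pi}{2}$ (along the arc) $+\,0$ (smooth corner at $(0,a,0)$, where the tangent directions match) $+\,\tfrac{\pi}{2}$ (right angle at the origin) $+\,0$ (smooth corner at $(a,0,0)$) $=2\pi$, and all four contributions are nonnegative. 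Hence $\tilde\gamma$ bounds a convex region $\Omega$.

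\textbf{Step 3 (Supporting line).} Suppose $0\in\ell_{u_0}$ for some $u_0\in(-u_2,u_2)$. Then $\ell_{u_0}$ is the line through the two distinct points $0$ and $\gamma(u_0)$, and since $\gamma(u_0)$ lies in the open first quadrant, $\ell_{u_0}$ has strictly positive slope; consequently $(a,0,0)$ and $(0,a,0)$ lie strictly on opposite sides of $\ell_{u_0}$. On the other hand, $\gamma(u_0)$ is a $C^1$ boundary point of the convex region $\Omega$, so $\ell_{u_0}$ is a supporting line of $\Omega$; thus $\overline\Omega$ — in particular its two boundary points $(a,0,0)$ and $(0,a,0)$ — lies in a single closed half-plane bounded by $\ell_{u_0}$. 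This contradicts the previous sentence, so $0\notin\ell_{u_0}$, and the lemma follows.

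I expect the main obstacle to be making Step 2 airtight: verifying that $\tilde\gamma$ is embedded and invoking the converse of the classical total-curvature characterization of convex curves (a simple closed piecewise-$C^1$ curve with monotone tangent angle and total turning $2\pi$ bounds a convex region), together with the turning bookkeeping at the three corners, which relies on reading off the precise tangent directions of $\gamma$ at $u=\pm u_2$ from the segment parametrizations in Definition \ref{def:ribbon}.
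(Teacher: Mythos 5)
Your proof is correct, but it takes a genuinely different route from the paper. You reformulate linear independence as ``the origin does not lie on the affine tangent line,'' close the arc $\gamma|_{[-u_2,u_2]}$ with the two axis segments from $(0,a,0)$ to the origin and from the origin to $(a,0,0)$, verify (via Hopf's Umlaufsatz and the nonnegative turning at the three corners, the key one being $\pi/2$ at the origin) that the resulting simple closed curve bounds a convex region $\Omega$, and then derive a contradiction from the supporting-line property: a line through the origin and a point of the open first quadrant strictly separates $(a,0,0)$ from $(0,a,0)$, yet both lie on $\partial\Omega$. The paper instead works locally and analytically: it uses the symmetry $\Psi$ to reduce to $u\in(-u_2,0]$, reparametrizes by the turning angle $\theta\in[0,3\pi/4]$, writes the wedge-product quantity $F(\theta)=\det(\gamma,\gamma')$ explicitly via integrals, shows $F(0)=0$ and $F'(\theta)>0$ on $(0,\pi/2)$, and handles $\theta\in(\pi/2,3\pi/4]$ by observing that $\gamma$ and $\gamma'$ then lie in different open quadrants. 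Your argument is more global and geometric, avoids the symmetry reduction and the case split, and handles the entire interval at once; the price is that it invokes the Umlaufsatz and the converse total-turning characterization of convex curves, whereas the paper's argument is more hands-on calculus. One small remark: once you know all turning contributions are nonnegative and the curve is simple, Hopf's Umlaufsatz already forces the total to be $2\pi$, so your explicit bookkeeping of the total is a consistency check rather than a needed hypothesis; the substantive inputs are simplicity, nonnegativity of the turning everywhere (with corner angles in $[0,\pi)$), and the converse characterization of convexity. Both approaches are sound.
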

\begin{proof}
    Due to the symmetry of $\gamma$ with respect to the line $\{x = y\}$, it suffices to show the result for $u \in (-u_2,0]$. We can express the unit tangent vector $\gamma'(u)$ as $(\cos\theta(u),\sin\theta(u))$ for some function $\theta = \theta(u)$. By definition of ribbon, it holds
    $$\theta(-u_2) = 0, \; \; \; \theta(0) = \frac{3\pi}{4}.$$ 
    Moreover, since $\gamma$ is convex, the function $\theta(u)$ must be strictly increasing, so we may reparametrize this curve in terms of $\theta \in [0,\frac{3\pi}{4}]$. We have then
    $$\gamma(\theta) = \left(a + \int_0^{\theta}f(\nu)\cos\nu d\nu,\int_0^{\theta}f(\nu)\sin\nu d\nu\right),$$
    where $f(\nu) := \frac{\partial u}{\partial \theta} > 0$. Now, the condition for $\gamma(\theta)$ and $\gamma'(\theta)$ to be linearly independent is that
    $$F(\theta) = \sin\theta\left(a + \int_0^{\theta}f(\nu)\cos\nu d\nu\right) - \cos\theta\int_0^{\theta}f(\nu)\sin\nu d\nu$$
    does not vanish for any $\theta \in \left(0,\frac{3\pi}{4}\right)$. If $\theta \in \left(0,\frac{\pi}{2}\right)$, it is clear that $F(\theta) > 0$, as $F(0) = 0$ but
    $$F'(\theta) = \cos\theta\left(a + \int_0^{\theta}f(\nu)\cos\nu d\nu\right) + \sin\theta\int_0^{\theta}f(\nu)\sin\nu d\nu > 0.$$
    Now, if $\theta \in \left(\frac{\pi}{2},\frac{3\pi}{4}\right]$, we have that $\gamma'(\theta)$ belongs to the quadrant $[-1,0)\times [0,1)\times \{0\}$, while $\gamma(\theta)$ belongs to $(0,1)\times (0,1)\times \{0\}$. Hence, $\gamma(\theta)$ and $\gamma'(\theta)$ must be linearly independent in this case as well. This completes the proof of the lemma.
\end{proof}

\begin{definition}\label{def:cpv}
    Given $p \in \mathbb{D}^2$, $p \neq (0,0,0)$, let $\Pi_p$ be the vertical plane passing through the origin and $p \in \mathbb{D}^2$. We then define the curve $c_p \subset \Pi_p$ as the unique circle passing through $p$ which intersects $\mathbb{D}^2$ and $\partial \mathbb{B}^3$ orthogonally (see Figure \ref{fig:cpv}). If $p = (0,0,0)$, we denote by $c_p$ the vertical segment $\{0\}\times \{0\}\times [-1,1]$.
\end{definition}

\begin{figure}[ht]
\centering
\includegraphics[width=0.4\textwidth]{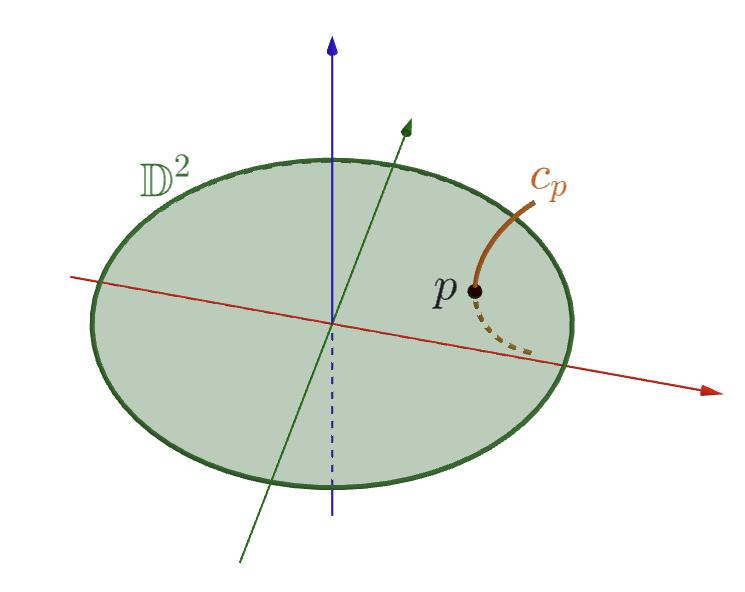}
\caption{Piece of a curve $c_p$ inside $\mathbb{B}^3$.}
\label{fig:cpv}
\end{figure}

\begin{remark}\label{rem:cpvparametrization}
    Let $p \in \mathbb{D}^2$ and $v$ be the arc-length parameter of the curve $c_p = c_p(v)$, so that $c_p(0) = p$ and $c_p'(0) = (0,0,1)$. For every $p$, there exists an interval $I_p := [-v_p,v_p]$, $v_p > 0$, such that $c_p(v) \in \mathbb{B}^3$ for all $v \in I_p$, and $c_p(-v_p),c_p(v_p) \in \partial \mathbb{B}^3$. In particular, if $p \in \partial \mathbb{D}^2$, $I_p = \{0\}$.
\end{remark}

\begin{definition}\label{def:psigamma}
   Given a ribbon $\gamma(u)$ in $\mathbb{D}^2$, we define $\Sigma_\gamma$ as the surface given by the immersion $\psi_\gamma(u,v):\cD \to \mathbb{B}^3$, where
   $$\psi_\gamma(u,v) := c_{\gamma(u)}(v),$$
   and $\cD \subset \mathbb{R}^2$ is the disk
   \begin{equation}\label{eq:cD}
   \cD := \{(u,v) : u \in [-u_3,u_3], v \in I_{\gamma(u)}\}.
   \end{equation}
   Here, $I_{\gamma(u)}$ denotes the interval introduced in Remark \ref{rem:cpvparametrization}; see Figure \ref{fig:psigamma}.
\end{definition}

\begin{remark}\label{rem:Gaussmap}
    For every $u$ such that $\gamma(u) \neq (0,0)$, we can explicitly compute $\psi_\gamma(u,v)$ as
    \begin{equation}\label{eq:psigamma}
        \psi_\gamma(u,v) = \left(\gamma_1(u) F_{\gamma}(u,v),\gamma_2(u) F_{\gamma}(u,v), R_{\gamma}(u) \sin\left(\frac{v}{R_{\gamma}(u)}\right)\right),
    \end{equation}
    where
    \begin{equation}\label{eq:RF}
    \begin{aligned}
        R_{\gamma}(u) &:= \frac{1}{2}\left(\frac{1}{\|\gamma(u)\|} - \|\gamma(u)\|\right), \\
        F_{\gamma}(u,v) &:= 1 + \frac{R_{\gamma}(u)}{\|\gamma(u)\|}\left(1 - \cos\left(\frac{v}{R_{\gamma}(u)}\right)\right).
    \end{aligned}
    \end{equation}
    Moreover, we choose the Gauss map $N_\gamma(u,v)$ of $\psi_\gamma$ so that the frame $\{(\psi_\gamma)_u,(\psi_\gamma)_v,N_\gamma\}$ is positively oriented.
\end{remark}

\begin{remark}\label{rem:cD}
    We emphasize that $\cD$ in \eqref{eq:cD} is a disk: indeed, if $u = \pm u_3$, then $\gamma(u) \in \partial \mathbb{D}^2$, and so the interval $I_{\gamma(u)}$ reduces to $\{0\}$. Moreover, $\psi_\gamma(u,v)$ will be of class $\mathcal{C}^\infty$ if $\gamma(u)$ is.
\end{remark}

\begin{proposition}\label{pro:psiFB}
    Let $\gamma(u)$ be a ribbon and $\psi_\gamma(u,v)$ be the corresponding immersion in Definition \ref{def:psigamma}. Then, $\Sigma_\gamma$ is a free boundary disk in $\mathbb{B}^3$.
\end{proposition}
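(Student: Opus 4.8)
The plan is to check, in turn, the defining properties of a free boundary disk in $\mathbb{B}^3$: that the parameter domain $\cD$ in \eqref{eq:cD} is a topological disk; that $\psi_\gamma$ is a $\mathcal{C}^\infty$ immersion; that $\psi_\gamma(\cD)\subset\mathbb{B}^3$ with $\psi_\gamma(\partial\cD)\subset\partial\mathbb{B}^3$; and that $\Sigma_\gamma$ meets $\partial\mathbb{B}^3$ orthogonally along $\psi_\gamma(\partial\cD)$. The first property is recorded in Remark \ref{rem:cD}, and the inclusions $\psi_\gamma(\cD)\subset\mathbb{B}^3$, $\psi_\gamma(\partial\cD)\subset\partial\mathbb{B}^3$ are immediate from the definition of $\cD$ through the intervals $I_{\gamma(u)}$ in Remark \ref{rem:cpvparametrization}. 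So the substance of the proof is the immersion property and the orthogonality.

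For the immersion property I would split $\cD$ according to whether $|u|<u_2$, where by Definition \ref{def:ribbon} the point $\gamma(u)$ lies in the open quadrant and hence $\gamma(u)\neq(0,0)$, or $u_2\le|u|\le u_3$, where $\gamma$ runs along one of the coordinate axes. On the first region one differentiates the explicit formula \eqref{eq:psigamma} and uses $F_\gamma>0$ (clear from \eqref{eq:RF}) to show that the $2\times2$ minor of $[(\psi_\gamma)_u\mid(\psi_\gamma)_v]$ formed by the first two coordinates is nonzero exactly when $\sin(v/R_\gamma)\neq0$ and the vectors $\gamma(u),\gamma'(u)$ are linearly independent. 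Since the circle $c_{\gamma(u)}$ leaves $\mathbb{B}^3$ before it turns by $\pi/2$, one has $|v/R_\gamma|<\pi/2$ on $\cD$, so this minor vanishes only on $\{v=0\}$ or where $\gamma\parallel\gamma'$; the latter is excluded by Lemma \ref{lem:linindep}. On $\{v=0\}$ one has $\psi_\gamma(u,0)=\gamma(u)$ and $(\psi_\gamma)_v(u,0)=(0,0,1)$, so $(\psi_\gamma)_u$ and $(\psi_\gamma)_v$ are independent there as well. On the second region the circles $c_{\gamma(u)}$ all lie in a fixed coordinate plane $\{x_i=0\}$ and are precisely the geodesics of the hyperbolic disk $\{x_i=0\}\cap\mathbb{B}^3$ perpendicular to the diameter $\{x_i=0\}\cap\mathbb{D}^2$; hence $\psi_\gamma$ restricted there is a reparametrization of the Fermi coordinates of that hyperbolic disk based at this geodesic, and in particular an immersion (equivalently, a short direct computation gives the surviving minor as $-\bigl[\cos(v/R_\gamma)+\tfrac{1+\|\gamma\|^2}{2\|\gamma\|^2}\bigl(1-\cos(v/R_\gamma)\bigr)\bigr]<0$).

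For orthogonality, note from \eqref{eq:cD} that the two boundary arcs of $\cD$ are $\{v=\pm v_{\gamma(u)}\}$, which by Definition \ref{def:cpv} and Remark \ref{rem:cpvparametrization} are carried by $\psi_\gamma$ into $\partial\mathbb{B}^3$. At such a boundary point $q=c_{\gamma(u)}(\pm v_{\gamma(u)})$, the vector $(\psi_\gamma)_v(u,\pm v_{\gamma(u)})=c_{\gamma(u)}'(\pm v_{\gamma(u)})$ is a unit vector which, because $c_{\gamma(u)}$ meets $\partial\mathbb{B}^3$ orthogonally, is orthogonal to $T_q\partial\mathbb{B}^3$ and hence parallel to the outward unit normal $q$ of $\partial\mathbb{B}^3$ at $q$. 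Since $(\psi_\gamma)_v(u,\pm v_{\gamma(u)})\in T_q\Sigma_\gamma$, we get $q\in T_q\Sigma_\gamma$, which is precisely the statement that $\Sigma_\gamma\perp\partial\mathbb{B}^3$ at $q$.

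The main obstacle is the behaviour at the points where the parametrization $\psi_\gamma$ is degenerate and the cases above do not directly apply. There are two such loci: the two points where $\gamma$ crosses the origin, where $\|\gamma\|=0$ and \eqref{eq:psigamma} is singular --- here one rewrites the offending terms using $1/R_\gamma$ (equivalently $\|\gamma(u)\|^2$) as the variable to see that $\psi_\gamma$ extends $\mathcal{C}^\infty$ and is still an immersion, $\Sigma_\gamma$ being a planar piece of $\{x_i=0\}\cap\mathbb{B}^3$ there; and, most delicately, the two corner points $\psi_\gamma(\pm u_3,0)\in\partial\mathbb{D}^2$, where $R_\gamma=0$ and $\psi_\gamma$ is only of ``polar coordinate'' type, so that one cannot read off regularity from the map itself. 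At those two points one instead checks that $\Sigma_\gamma$ coincides, near $\psi_\gamma(\pm u_3,0)$, with a neighbourhood of that point inside the flat disk $\{x_i=0\}\cap\mathbb{B}^3$, which is obviously a smooth surface meeting $\partial\mathbb{B}^3$ orthogonally there. Assembling the pieces then shows that $\Sigma_\gamma$ is a $\mathcal{C}^\infty$ free boundary disk in $\mathbb{B}^3$.
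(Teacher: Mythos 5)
Your proposal is correct, and considerably more thorough than the paper's argument. The paper's proof is essentially one paragraph devoted only to the orthogonality condition: it decomposes $\Sigma_\gamma$ into the three pieces in \eqref{eq:Sigma123}, observes that $\Sigma_\gamma^{(1)},\Sigma_\gamma^{(3)}$ are pieces of equatorial (flat) disks, hence free boundary, and notes that $\Sigma_\gamma^{(2)}$ is ruled by the vertical circles $c_{\gamma(u_0)}$, which meet $\partial\mathbb{B}^3$ orthogonally by construction (Definition \ref{def:cpv}) --- exactly your observation that the unit boundary tangent $(\psi_\gamma)_v$ is normal to $\partial\mathbb{B}^3$, so the outward normal of $\partial\mathbb{B}^3$ lies in $T_q\Sigma_\gamma$. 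The topological and regularity assertions are delegated to Remark \ref{rem:cD}, and the immersion property is not verified at all. What you add is precisely that verification: on $|u|<u_2$ the $2\times2$ minor in the first two coordinates equals $F_\gamma\,(F_\gamma)_v\,(\gamma_1'\gamma_2-\gamma_2'\gamma_1)$, nonvanishing for $v\neq0$ by Lemma \ref{lem:linindep} and the bound $|v/R_\gamma|<\pi/2$, with the $v=0$ case handled directly; on the axis segments your explicit minor $-\bigl[\cos(v/R_\gamma)+\tfrac{1+\|\gamma\|^2}{2\|\gamma\|^2}\bigl(1-\cos(v/R_\gamma)\bigr)\bigr]$ checks out and is strictly negative; and you correctly isolate the two degenerate loci: the origin crossings $\|\gamma\|=0$ (where \eqref{eq:psigamma} extends smoothly because $R_\gamma\|\gamma\|\to\tfrac12$) and the corners $(\pm u_3,0)$, where $R_\gamma=0$ and $\mathcal{D}$ in fact has genuine corners (since $v_{\gamma(u)}\sim\tfrac{\pi}{2}(u_3-|u|)$), so smoothness of $\Sigma_\gamma$ really does require noting that the image is locally a flat disk rather than reading it off the parametrization. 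In short: your orthogonality argument coincides with the paper's; the rest of your proof fills in details the paper takes for granted, and it does so correctly.
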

\begin{proof}
    We may decompose $\Sigma_\gamma$ into three parts, namely
    \begin{equation}\label{eq:Sigma123}
    \begin{aligned}
        \Sigma_{\gamma}^{(1)} &:= \psi_\gamma\left(\mathcal{D} \cap \{u \leq -u_2\}\right), \\
        \Sigma_{\gamma}^{(2)} &:= \psi_\gamma\left(\mathcal{D} \cap \{-u_2 < u < u_2\}\right), \\
        \Sigma_{\gamma}^{(3)} &:= \psi_\gamma\left(\mathcal{D} \cap \{u \geq u_2\}\right),
        \end{aligned}
    \end{equation}
    respectively. It is clear that $\Sigma_{\gamma}^{(1)}$ and $\Sigma_{\gamma}^{(3)}$ are pieces of equatorial disks, so they meet $\partial \mathbb{B}^3$ orthogonally; see Definition \ref{def:ribbon}. We also note that the piece $\Sigma_{\gamma}^{(2)}$ intersects $\partial \mathbb{B}^3$ along the curves $u \mapsto \psi_{\gamma}(u,\pm v_{\gamma(u)})$, $u \in (-u_2,u_2)$. For every $u_0 \in (-u_2,u_2)$, the circle $v \mapsto \psi_\gamma(u_0,v) = c_{\gamma(u_0)}(v)$ meets $\partial \mathbb{B}^3$ orthogonally; see Definition \ref{def:cpv}. Thus, $\Sigma_{\gamma}^{(2)}$ meets $\partial \mathbb{B}^3$ orthogonally, as we wanted to show.
\end{proof}

\section{The saddle condition}\label{sec:saddledisks}

Let $\gamma$ be a ribbon and $\Sigma_\gamma$ be its associated free boundary disk. In general, $\Sigma_\gamma$ will not be a saddle surface. However, in this section we will prove that we can define a new ribbon $\tilde \gamma$ in terms of $\gamma$ such that the new free boundary disk $\Sigma_{\tilde \gamma}$ is actually saddle. 

Arguing as in the proof of Proposition \ref{pro:psiFB}, we can decompose every surface $\Sigma_\gamma$ into three pieces; see \eqref{eq:Sigma123}. The surfaces $\Sigma_{\gamma}^{(1)}$ and $\Sigma_{\gamma}^{(3)}$ are pieces of planes, so they are trivially saddle. Hence, it remains to study the curvature of the piece $\Sigma_{\gamma}^{(2)}$. We first prove the following:

\begin{proposition}\label{pro:curvaturapositiva}
    Let $\gamma(u)$ be a ribbon. For any $u_0 \in (-u_2,u_2)$, the normal curvature of the vertical circle $v \mapsto \psi_\gamma(u_0,v) = c_{\gamma(u_0)}(v)$ in $\Sigma_\gamma$ is positive.
\end{proposition}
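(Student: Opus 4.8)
The plan is to use the explicit parametrization \eqref{eq:psigamma}--\eqref{eq:RF} in Remark \ref{rem:Gaussmap}, which is valid for every $u_0 \in (-u_2,u_2)$ since $\gamma(u_0) \neq (0,0)$ by property (3) of Definition \ref{def:ribbon}. Fixing $u = u_0$ and writing $r := \|\gamma(u_0)\| \in (0,1)$, $R := R_\gamma(u_0) = \frac12(r^{-1}-r) > 0$, the $v$-curve $v \mapsto \psi_\gamma(u_0,v)$ is, by construction in Definition \ref{def:cpv}, an arc of the circle $c_{\gamma(u_0)}$ of radius $R$ lying in the vertical plane $\Pi_{\gamma(u_0)}$. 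The normal curvature of this curve in $\Sigma_\gamma$ is, by definition, $k_n = \langle (\psi_\gamma)_{vv}, N_\gamma \rangle / \langle (\psi_\gamma)_v,(\psi_\gamma)_v\rangle$; since $v$ is the arc-length parameter of $c_{\gamma(u_0)}$ we have $\langle (\psi_\gamma)_v,(\psi_\gamma)_v\rangle = 1$, so $k_n = \langle (\psi_\gamma)_{vv}, N_\gamma\rangle$. Because $(\psi_\gamma)_{vv}$ is the (full) curvature vector of the planar circle $c_{\gamma(u_0)}$, it has length $1/R$ and points toward the center of that circle; hence $k_n$ equals $\pm 1/R$ times the cosine of the angle between $N_\gamma$ and that inward radial direction, and the whole content of the proposition is that this sign is $+$ for the orientation of $N_\gamma$ fixed in Remark \ref{rem:Gaussmap}.

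Concretely, I would carry this out as follows. First, compute $(\psi_\gamma)_u$ and $(\psi_\gamma)_v$ from \eqref{eq:psigamma} at a general point $(u_0,v)$; here it is convenient to use coordinates adapted to $\Pi_{\gamma(u_0)}$, i.e. write $\gamma(u_0) = r(\cos\phi,\sin\phi)$ and use the orthonormal pair $e_1 = (\cos\phi,\sin\phi,0)$, $e_3 = (0,0,1)$ spanning $\Pi_{\gamma(u_0)}$ together with $e_2 = (-\sin\phi,\cos\phi,0)$. In these coordinates $c_{\gamma(u_0)}(v)$ traces a circle of radius $R$ centered at the point $c_0 := (r/2 + 1/(2r))\,e_1$ on the $e_1$-axis, starting at $\gamma(u_0) = r\,e_1$ with velocity $e_3$; so $(\psi_\gamma)_v(u_0,v)$ is the unit tangent to this circle and $(\psi_\gamma)_{vv}(u_0,v) = -\frac1R\,(\psi_\gamma)(u_0,v) \big/ \text{(unit radial)}$, precisely $(\psi_\gamma)_{vv} = \frac1R\big(c_0 - \psi_\gamma(u_0,v)\big)$ after normalizing, i.e. it points from the circle toward $c_0$. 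Next, compute $(\psi_\gamma)_u(u_0,v)$: differentiating \eqref{eq:psigamma} in $u$ produces a vector with a component along $e_1, e_3$ coming from $R_\gamma'(u_0)$ and $F$, and a component along $e_2$ proportional to $\dot\phi(u_0) = (\gamma_1\gamma_2' - \gamma_2\gamma_1')/r^2$, which by Lemma \ref{lem:linindep} is nonzero on $(-u_2,u_2)$. The Gauss map is $N_\gamma = \big((\psi_\gamma)_u \times (\psi_\gamma)_v\big)/|(\psi_\gamma)_u \times (\psi_\gamma)_v|$. Since $(\psi_\gamma)_v \in \Pi_{\gamma(u_0)}$ while $(\psi_\gamma)_u$ has its only out-of-plane component along $e_2$, the cross product $(\psi_\gamma)_u \times (\psi_\gamma)_v$ has a nonzero component in the plane $\Pi_{\gamma(u_0)}$ equal to $-\big((\psi_\gamma)_u \cdot e_2\big)\,\big(e_2 \times (\psi_\gamma)_v\big)$, and $e_2 \times (\psi_\gamma)_v$ is exactly the inward (or outward) radial unit vector of $c_{\gamma(u_0)}$ rotated appropriately; pairing this against $(\psi_\gamma)_{vv}$ and tracking signs yields $k_n = \langle (\psi_\gamma)_{vv}, N_\gamma\rangle$ as an explicit positive quantity.

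The cleanest route, which I would actually write up, avoids coordinates: note that $N_\gamma$ is everywhere tangent to $\Sigma_\gamma^{(2)}$'s foliating circles' common ``orthogonal'' structure, and more usefully that along each vertical circle $c_{\gamma(u_0)}$ the vector $(\psi_\gamma)_v$ spans the tangent plane together with $(\psi_\gamma)_u$; hence the plane $\Pi_{\gamma(u_0)}$ containing $c_{\gamma(u_0)}$ meets $\Sigma_\gamma$ transversally and $N_\gamma$ has nonzero component in $\Pi_{\gamma(u_0)}$, which must be $\pm$(the unit normal to $c_{\gamma(u_0)}$ within $\Pi_{\gamma(u_0)}$). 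Therefore $|k_n| = 1/R > 0$ automatically, and only the sign must be settled. To fix the sign I would evaluate everything at $v = 0$, where $\psi_\gamma(u_0,0) = \gamma(u_0)$, $(\psi_\gamma)_v(u_0,0) = e_3$, $(\psi_\gamma)_{vv}(u_0,0) = \frac1R(c_0 - \gamma(u_0)) = \frac{1}{2}(r^{-1}-r)^{-1}(r^{-1}-r)\,e_1 \cdot(\text{positive})$, i.e. it points in the $+e_1$ direction (toward the center $c_0$, which lies outside $\mathbb{B}^3$ on the far side), and $(\psi_\gamma)_u(u_0,0) = \gamma'(u_0)$; then $N_\gamma(u_0,0) = (\gamma'(u_0) \times e_3)/|\gamma'(u_0)\times e_3| = \gamma'(u_0)^\perp$, the in-plane $90^\circ$ rotation of $\gamma'(u_0)$, with the sign determined by the positive-orientation convention of Remark \ref{rem:Gaussmap}. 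So $k_n(u_0,0) = \langle (\psi_\gamma)_{vv}(u_0,0), N_\gamma(u_0,0)\rangle = \langle \text{(positive multiple of } e_1), \gamma'(u_0)^\perp\rangle$, and one checks using property (3) of Definition \ref{def:ribbon} (that $\gamma$ on $(-u_2,u_2)$ is a strictly convex arc of rotation index $3/4$ in the first quadrant, traversed so that the region it bounds with the axes is on one fixed side) that $\gamma'(u_0)^\perp$ has positive $e_1 = \gamma(u_0)/r$ component, i.e. $\langle \gamma'(u_0)^\perp, \gamma(u_0)\rangle > 0$, which is exactly $-F(\theta(u_0)) < 0$ or $+F(\theta(u_0)) > 0$ with $F$ as in Lemma \ref{lem:linindep} depending on orientation — and Lemma \ref{lem:linindep} guarantees this never vanishes, so the sign is constant in $u_0$ and can be read off at one point. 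The main obstacle is purely bookkeeping: getting the orientation of the Gauss map in Remark \ref{rem:Gaussmap} to line up with the inward direction of $(\psi_\gamma)_{vv}$ so that the sign comes out $+$ rather than $-$; once the convention is pinned down at $v=0$, the positivity of $k_n$ at all $v \in I_{\gamma(u_0)}$ follows since $|k_n| = 1/R$ is constant along the circle and $k_n$ varies continuously without vanishing.
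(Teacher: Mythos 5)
Your argument is essentially the same as the paper's: both reduce the statement to showing that the projection $\hat N$ of $N_\gamma$ onto the vertical plane $\Pi_{\gamma(u_0)}$ is nonvanishing (using Lemma~\ref{lem:linindep}, since $\hat N = 0$ would force $(\psi_\gamma)_u \in \Pi_{\gamma(u_0)}$ and hence $\gamma(u_0) \parallel \gamma'(u_0)$), and then fix the sign at a reference point and propagate it by continuity; the paper anchors at $u_0 = 0$ using the symmetry of $\Sigma_\gamma$ across $\{x=y\}$, while you anchor at $v=0$ using the explicit formula, which is a cosmetic difference. One small slip: you claim ``$|k_n| = 1/R$ automatically,'' but in fact $|k_n| = \frac{1}{R}|\langle N_\gamma, n\rangle| = \frac{1}{R}|\hat N| \le \frac{1}{R}$ with equality only when $N_\gamma$ lies in $\Pi_{\gamma(u_0)}$; this does not affect your conclusion, since the argument only requires $\hat N \neq 0$, not $|\hat N| = 1$.
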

\begin{proof}
    Let $\Pi_{u_0}$ denote the vertical plane in which $c_{\gamma(u_0)}$ is contained, and denote by $n = n(u_0,v)$ the inner normal vector to $c_{\gamma(u_0)}(v)$ in $\Pi_{u_0}$. To prove the proposition, it suffices to show that the projection $\hat N(u_0,v)$ of the Gauss map $N(u_0,v)$ of $\Sigma_\gamma$ onto $\Pi_{u_0}$ is a non-vanishing vector which has the same direction as $n(u_0,v)$: indeed, since $\psi_\gamma(u_0,v) = c_{\gamma(u_0)}(v)$ is a circle, this curve necessarily has positive normal curvature in $\Sigma_\gamma$.

    Assume first that $u_0 = 0$. Due to the symmetry properties of $\gamma(u)$ (see Definition \ref{def:ribbon}), it follows that $\Sigma_\gamma$ is symmetric with respect to the vertical plane $\Pi_0 = \{x = y\} \subset \mathbb{R}^3$. Thus, the Gauss map $N(0,v)$ lies in $\Pi_0$, i.e., $\hat N(0,v) = N(0,v)$. In fact, $N(0,v)$ must coincide with $n(0,v)$ due to the orientation chosen for this map; see Remark \ref{rem:Gaussmap}.

    Let us assume then that $u_0 \neq 0$. Since the projection $\hat N(u_0,v)$ must be orthogonal to $v \mapsto \psi_\gamma(u_0,v)$, it follows that $\hat N(u_0,v)$ and $n(u_0,v)$ must be collinear. Now, as we already proved for $u = 0$ that $\hat N(0,v)$ and $n(0,v)$ coincide, then necessarily $\hat N(u_0,v)$ and $n(u_0,v)$ must have the same direction as long as $\hat N(u_0,v)$ does not vanish anywhere. This condition is satisfied: indeed, assume by contradiction that $\hat N(u_0,v)$ vanishes for some $(u_0,v)$. Geometrically, this means that $N(u_0,v)$ is orthogonal to the vertical plane $\Pi_{u_0}$, so in particular $\left(\psi_\gamma\right)_u(u_0,v) \in \Pi_{u_0}$. Hence, the horizontal components of $\psi_\gamma(u_0,v)$ and $(\psi_\gamma)_u(u_0,v)$ must be collinear. This is impossible: by \eqref{eq:psigamma}, it would follow that
    $$\left(\gamma_1'(u_0)F_\gamma(u_0,v) + \gamma_1(u_0)(F_\gamma)_u(u_0,v)\right)\gamma_2(u_0)F_\gamma(u_0,v) =$$ $$\left(\gamma_2'(u_0)F_\gamma(u_0,v) + \gamma_2(u_0)(F_\gamma)_u(u_0,v)\right)\gamma_1(u_0)F_\gamma(u_0,v),$$
    which reduces to $\gamma_1'(u_0) \gamma_2(u_0) = \gamma_1(u_0)\gamma_2'(u_0)$. This implies that $\gamma(u_0)$ and $\gamma'(u_0)$ are collinear at $u_0$, which is impossible by Lemma \ref{lem:linindep}. This completes the proof of the proposition.
\end{proof}

The previous proposition states that for every ribbon $\gamma$, the vertical circles $v \mapsto c_{\gamma(u_0)}(v)$, $u_0 \in (-u_2,u_2)$ that foliate the piece of surface $\Sigma_{\gamma}^{(2)}$ have positive normal curvature. Thus, if we prove that $\Sigma_{\gamma}^{(2)}$ admits another foliation by curves with negative normal curvature, it would follow that $\Sigma_{\gamma}^{(2)}$ is saddle. As we mentioned at the beginning of this section, not every surface $\Sigma_{\gamma}^{(2)}$ has this property. Nevertheless, we will be able to deform $\gamma$ into a new ribbon $\tilde \gamma$ in a way such that the piece $\Sigma_{\gamma}^{(2)}$ is saddle. We study this property next.

\begin{definition}\label{def:gammat}
    Let $\gamma(u):[-u_3,u_3] \to \mathbb{D}^2$ be a ribbon with the notations established in Definition \ref{def:ribbon}. For any $t \in (0,1]$, we define $\gamma_t(u):[-u_3,u_3] \to \mathbb{D}^2$ as the ribbon satisfying:
    \begin{enumerate}
        \item $\gamma_t$ parametrizes the segment $[-1,ta]\times \{0\} \times \{0\}$ on the interval $u \in [-u_3,-u_2]$.
        \item On the interval $u \in [u_2,u_3]$, $\gamma_t$ parametrizes the segment $\{0\}\times [-1,ta] \times \{0\}$.
        \item For $u \in (-u_2,u_2)$, $\gamma_t(u) = t\gamma(u)$.
    \end{enumerate}
\end{definition}

\begin{remark}
    Intuitively, $\gamma_t$ is the result of applying an homothety of factor $t \leq 1$ to the ribbon $\gamma$; see Figure \ref{fig:gammat}. Moreover, $\gamma_t$ is $\mathcal{C}^\infty$ if $\gamma$ is. We emphasize nevertheless that the induced disks $\Sigma_{\gamma_t}$ and $\Sigma_\gamma$ are not related by an homothety. We also note that unless $t = 1$, $\gamma_t(u)$ is not parametrized by arc length.
\end{remark}

\begin{figure}[ht]
\centering
\includegraphics[width=0.4\textwidth]{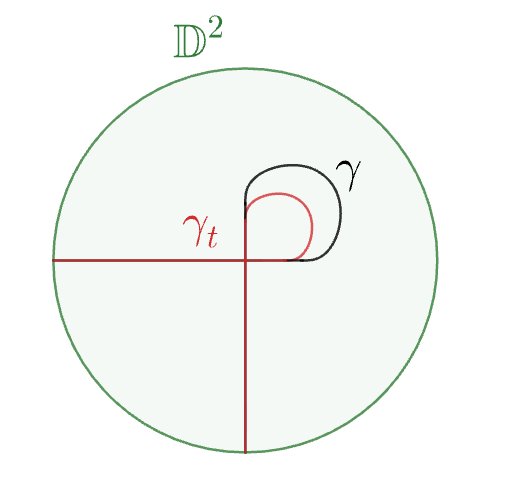}
\caption{Ribbon $\gamma_t$ for $t = 3/4$.}
\label{fig:gammat}
\end{figure}

\begin{proposition}\label{pro:curvaturanegativa}
Given a ribbon $\gamma$ there exists $t_0 \in (0,1]$ such that the surface $\Sigma_{\gamma_{t}}^{(2)}$ is saddle for every $t \in (0,t_0]$; see Definition \ref{def:gammat}. 
\end{proposition}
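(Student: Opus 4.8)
Here is the approach I would take. By Proposition \ref{pro:curvaturapositiva} applied to the ribbon $\gamma_t$, the circles $v\mapsto\psi_{\gamma_t}(u_0,v)$ have positive normal curvature in $\Sigma_{\gamma_t}$; writing the first and second fundamental forms of $\psi_{\gamma_t}$ as $E\,du^2+2F\,du\,dv+G\,dv^2$ and $L\,du^2+2M\,du\,dv+N\,dv^2$, this says $N>0$ on $\cD\cap\{|u|<u_2\}$. Since $EG-F^2>0$ and the Gaussian curvature equals $(LN-M^2)/(EG-F^2)$, it suffices to prove that the transverse foliation of $\Sigma^{(2)}_{\gamma_t}$ by the curves $u\mapsto\psi_{\gamma_t}(u,v_0)$ has non-positive normal curvature, i.e. $L=\langle(\psi_{\gamma_t})_{uu},N_{\gamma_t}\rangle\le 0$: then $LN-M^2\le-M^2\le0$, hence $K\le0$ and $\Sigma^{(2)}_{\gamma_t}$ is saddle. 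Along $v=0$ this is immediate: from $F_{\gamma_t}(u,0)=1$ and \eqref{eq:psigamma} one gets $\psi_{\gamma_t}(u,0)=(\gamma_t(u),0)$, $(\psi_{\gamma_t})_u(u,0)=(t\gamma'(u),0)$, $(\psi_{\gamma_t})_{uu}(u,0)=(t\gamma''(u),0)$, $(\psi_{\gamma_t})_v(u,0)=(0,0,1)$, whence $N_{\gamma_t}(u,0)=(\gamma_2'(u),-\gamma_1'(u),0)$ and $L(u,0)=-t\kappa(u)$, strictly negative on $(-u_2,u_2)$ by convexity of $\gamma$. The homothety parameter $t$ is needed to keep $L$ negative for $v\neq0$.

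To obtain a non-degenerate limit I would rescale by $1/t$ and use the vertical coordinate. By \eqref{eq:psigamma}, the map $\hat\psi_t(u,z):=\tfrac1t\psi_{\gamma_t}(u,v)$, reparametrised by $z:=\tfrac1t R_{\gamma_t}(u)\sin(v/R_{\gamma_t}(u))$, becomes $\hat\psi_t(u,z)=(\gamma(u)\,\Theta_t(u,z),\,z)$, where by \eqref{eq:RF} $\Theta_t(u,z)=F_{\gamma_t}(u,v)$ is an explicit function of $(t\rho(u),tz)$ alone (with $\rho=\|\gamma\|$), satisfying $1\le\Theta_t\le2$ and $\partial_u\Theta_t,\partial_{uu}\Theta_t=O(t^2)$ uniformly on $\{|u|\le u_2\}$ and on the admissible (growing) range of $z$. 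Since homotheties and reparametrisations preserve the sign of $K$ and of normal curvatures, and the circles $z\mapsto\hat\psi_t(u_0,z)$ still have positive normal curvature, the problem becomes: show $\hat L_t:=\langle(\hat\psi_t)_{uu},\hat N_t\rangle\le0$. As $(\hat\psi_t)_{uu}$ is horizontal, a short computation shows $\hat L_t$ has the sign of $-\,(\gamma\Theta_t)'\times(\gamma\Theta_t)''$, where $\times$ is the scalar cross product of planar vectors; thus $\hat L_t\le0$ if and only if, for each fixed $z$, the planar curve $u\mapsto\gamma(u)\,\Theta_t(u,z)$ has non-negative curvature — that is, radially rescaling the convex curve $\gamma$ by the slowly varying positive factor $\Theta_t(\cdot,z)$ keeps it convex. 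At $z=0$, where $\Theta_t\equiv1$, this is just the convexity of $\gamma$.

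Expanding,
\[
(\gamma\Theta_t)'\times(\gamma\Theta_t)''=\Theta_t^2\,\kappa+\Theta_t(\partial_u\Theta_t)\,(\gamma\times\gamma'')+\big(2(\partial_u\Theta_t)^2-\Theta_t\,\partial_{uu}\Theta_t\big)(\gamma\times\gamma'),
\]
and we must show this is positive. On $\{|u|\le u_1\}$ and on compact subintervals of $(u_1,u_2)\cup(-u_2,-u_1)$ bounded away from $\pm u_2$, $\kappa$ is bounded below by a positive constant while $\gamma\times\gamma'$ and $\gamma\times\gamma''$ are bounded; since $\Theta_t\ge1$ and $\partial_u\Theta_t,\partial_{uu}\Theta_t=O(t^2)$, the expression is $\ge\kappa-O(t^2)>0$ for $t$ small. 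The delicate region is near $u=\pm u_2$, where $\kappa\to0$ and the leading term degenerates; here hypothesis (4) of Definition \ref{def:ribbon} is essential. Writing $\gamma'=(\cos\theta,\sin\theta)$, one has on $[u_1,u_2)$ that $\gamma_1'(u)=-\sin\!\big(\int_u^{u_2}\kappa\big)$ and $\gamma_2'(u)=-\cos\!\big(\int_u^{u_2}\kappa\big)$, and since $\kappa$ is decreasing on $[u,u_2]$, $\int_u^{u_2}\kappa\le(u_2-u)\kappa(u)$; this gives $|\gamma_1'|\le(u_2-u)\kappa$, $|\gamma_1|\le\tfrac12(u_2-u)^2\kappa$, $|\gamma_2''|\le(u_2-u)\kappa^2$, and hence $|\gamma\times\gamma'|=O((u_2-u)\kappa)$ and $|\gamma\times\gamma''|=O(\kappa)$ near $u_2$, with symmetric estimates near $-u_2$ by property (1). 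Feeding these in, every term of the displayed expression other than $\Theta_t^2\kappa$ is $O(t^2\kappa)$, so the expression is $\ge\kappa(1-O(t^2))>0$ near $\pm u_2$ as well. Choosing $t_0$ small enough yields $\hat L_t\le0$ on $\cD\cap\{|u|<u_2\}$ for all $t\le t_0$, and with the first paragraph this proves $\Sigma^{(2)}_{\gamma_t}$ is saddle.

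The main obstacle is precisely this endpoint analysis: bounding $\gamma\times\gamma'$ and $\gamma\times\gamma''$ by a fixed multiple of $\kappa$ near $u=\pm u_2$, for which the monotonicity of the curvature in Definition \ref{def:ribbon}(4) is exactly what is needed (and consistent with $\gamma\times\gamma'$ being nonvanishing there by Lemma \ref{lem:linindep}). The remaining ingredients — the explicit form of $\Theta_t$ and its $O(t^2)$ derivative estimates, the identity for the sign of $\hat L_t$, and the compactness argument away from the endpoints — are routine.
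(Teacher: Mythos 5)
Your proposal is correct and follows essentially the same route as the paper: reduce to showing that the foliation by horizontal level curves (your $u\mapsto\gamma(u)\Theta_t(u,z)$, the paper's $\Gamma_{(t,c)}$) is convex for $t$ small, combine this with Proposition \ref{pro:curvaturapositiva}, and handle the degeneration of $\kappa$ near $u=\pm u_2$ by exploiting Definition \ref{def:ribbon}(4) to bound the non-leading terms by $O(\kappa)$ (your bound on $\gamma\times\gamma'$ is exactly the paper's bound on $\langle n,\gamma\rangle$). The only cosmetic differences are that you rescale by $1/t$ and reparametrise by the vertical coordinate $z$ before estimating, and you phrase the conclusion as $\hat L_t\le 0$ rather than via the sign of $\langle\hat n,N\rangle$; the content of the estimates is the same.
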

\begin{proof}
We will show that, for sufficiently small $t$, the horizontal level sets 
\begin{equation}\label{eq:Gtc}
    \Gamma_{(t,c)} := \Sigma_{\gamma_t}^{(2)} \cap \{x_3 = c\}
\end{equation}
with $|c| \leq 1$ are regular curves and have negative normal curvature. Along with Proposition \ref{pro:curvaturapositiva}, it follows that $\Sigma_{\gamma_t}^{(2)}$ is saddle.

    Let us first show that these horizontal level sets are regular curves. By definition of $\gamma_t$, it holds $\|\gamma_t(u)\| < t$. Thus, by \eqref{eq:RF}, it follows that $R_{\gamma_t}(u) > M$ for some function $M = M(t)$, where $M(t)$ diverges as $t$ approaches zero. In particular, choosing $t_0$ such that $M(t) > 1$ for every $t \leq t_0$ and using \eqref{eq:psigamma}, it is possible to deduce that the intersection in \eqref{eq:Gtc} is transversal, and in fact the corresponding level curve can be parametrized as
    \begin{equation}\label{eq:Gtcparam}
        \begin{aligned}
            \Gamma_{(t,c)}(u) &= \gamma_t(u)\left(1 + \frac{G_t(u)}{\|\gamma_t(u)\|}\right) + c {\bf e}_3, \\
            G_t(u) &:= R_{\gamma_t}(u)\left(1 - \sqrt{1 - \left(\frac{c}{R_{\gamma_t}(u)}\right)^2}\right),
        \end{aligned}
    \end{equation}
 where $\gamma_t \equiv (\gamma_{t,1},\gamma_{t,2},0)$, ${\bf e}_3 = (0,0,1)$, and $\gamma_{t,1}, \gamma_{t,2}$ denote the first and second components of $\gamma_t$. 
 
 We will now show that $\Gamma_{(t,c)}$ is convex for every $|c| \leq 1$. We can express $G_t(u)$ in \eqref{eq:Gtcparam} as
 $$G_t(u) = c^2\|\gamma(u)\|t + t^3\|\gamma(u)\|J_t(u)$$
 for some $J_t(u)$ which is analytic in $t \in [0,1]$ and $u \in [-u_2,u_2]$. In particular, it is bounded. Thus, by \eqref{eq:Gtcparam}, we deduce
 \begin{equation}\label{eq:Gtcderivadas}
     \begin{aligned}
      \Gamma_{(t,c)}'(u) &= t\gamma'(u)(1 + c^2 + t^2J_t(u)) + t^3\gamma(u)J_t'(u), \\
      \Gamma_{(t,c)}''(u) &= t\gamma''(u)\left(1 + c^2 + t^2 J_t(u)\right) + 2t^3\gamma'(u)J_t'(u) + t^3\gamma(u) J_t''(u).
     \end{aligned}
 \end{equation}
 Let $\hat n(u)$ be the unit normal vector to the planar curve $\Gamma_{(t,c)}(u)$ and denote by $n(u)$ the unit normal to $\gamma(u)$. We choose $n(u)$ so that the curvature $\kappa(u)$ of the convex curve $\gamma(u)$, given by $\kappa(u) = \langle n,\gamma''\rangle$, is positive. According to the first derivative in \eqref{eq:Gtcderivadas}, it holds that $\hat n(u)$ is collinear to $$n(u) + t^2\gamma'(u) \langle n(u),\gamma(u)\rangle K_t(u),$$ 
 for some bounded analytic function $K_t(u)$. Thus, we conclude that $\Gamma_{(t,c)}(u)$ will be convex if and only if the quantity
 $$\alpha(u) := \frac{1}{t}\langle \Gamma_{(t,c)}''(u), n(u) + t^2 \gamma'(u) \langle n(u),\gamma(u) \rangle K_t(u) \rangle$$
 is positive. By \eqref{eq:Gtcderivadas},
 \begin{equation}\label{eq:alpha}
 \begin{aligned}\alpha(u) &= \kappa (1 + c^2) + \kappa t^2 J_t + t^2 \langle n, \gamma\rangle J_t''  \\
 &+ 2t^4\langle n, \gamma \rangle J_t' K_t + t^4\langle\gamma , \gamma' \rangle \langle n, \gamma \rangle J_t''K_t.
 \end{aligned}
 \end{equation}
 We will now show that $\alpha(u) > 0$ if $t$ is sufficiently near zero. We will split the proof in two cases, depending on whether $u \in [0,u_1]$ or $u \in [u_1,u_2)$, where we use the notations in Definition \ref{def:ribbon}. The case $u \leq 0$ is immediate by the symmetry of $\gamma(u)$; see Definition \ref{def:ribbon}. Assume first that $u \in [0,u_1]$. We know that the curvature $\kappa(u)$ of $\gamma(u)$ is strictly positive on this closed interval; see Definition \ref{def:ribbon}. Thus, setting $\kappa_0 := \min_{u \in [0,u_1]}\kappa(u) > 0$, it is clear by \eqref{eq:alpha} that for $t$ small enough, the bound $\alpha(u) > \frac{1}{2}\kappa_0(1 + c^2) > 0$ is satisfied. 
 
 Let us then assume that $u \in [u_1,u_2)$. We will find a suitable bound for the product $\langle n(u), \gamma(u)\rangle$. We can express $n(u)$ and $\gamma(u)$ for $u < u_2$ as
\begin{align*}
 n(u) &= n(u_2) + \int_u^{u_2} \kappa(s)\gamma'(s)ds, \\
 \gamma(u) &= \gamma(u_2) - \int_u^{u_2}dw\left(\gamma'(u_2) - \int_{w}^{u_2}\kappa(z)n(z)dz\right).
\end{align*}
 By definition of ribbon, at $u = u_2$ it holds $\gamma(u_2) = (0,a,0)$, $\gamma'(u_2) = (0,-1,0)$ and $n(u_2) = (1,0,0)$. Hence,
 \begin{align*}
     |\langle n(u),\gamma(u)\rangle | & \leq \left|\int_u^{u_2} dw \int_w^{u_2} \kappa(z) \langle n(z), n(u_2)\rangle dz\right|  + \left|\int_u^{u_2} ds \int_u^{u_2}  \kappa(s) \langle \gamma'(s), \gamma'(u_2)\rangle dw\right|\\
     & + \left|\int_u^{u_2} \kappa(s) \langle \gamma'(s),\gamma(u_2)\rangle ds \right| + \left|\int_u^{u_2} ds \int_u^{u_2} dw \int_w^{u_2} \kappa(s)\kappa(z)\langle \gamma'(s), n(z)\rangle dz \right|.
 \end{align*}
By hypothesis, $\kappa(u)$ is positive and decreasing for $u \in [u_1,u_2]$; see Definition \ref{def:ribbon}. Hence, all of the above integrals can be bounded by $C\kappa(u)$ for some positive constant $C > 0$ independent of $u$. In particular, by \eqref{eq:alpha} and the boundedness of the functions $J_t$, $K_t$, we deduce that there exists a constant $C_1 > 0$ such that
$$\alpha(u) > \kappa(u) (1 + c^2) - \kappa(u) t^2 C_1 = \kappa(u)(1 + c^2 - t^2C_1).$$
Hence, we conclude that $\alpha(u) > 0$ for $u \in [u_1,u_2)$ and $t$ close enough to zero, showing that the planar curve $\Gamma_{(t,c)}(u)$ is convex.

So far, we have shown that the planar curves $\Gamma_{(t,c)}$ in \eqref{eq:Gtc} are convex for all $|c| \leq 1$ for small values of $t$. We will now show that the normal curvature of these curves in $\Sigma_{\gamma_t}^{(2)}$ is negative. To do so, it suffices to prove that the product $\langle \hat n, N \rangle$ is negative, where we recall that $\hat n$ denotes the unit normal to $\Gamma_{(t,c)}$ and $N$ is the Gauss map of $\Sigma_{\gamma_t}$. We will study the cases $c = 0$ and $c \neq 0$ separately. If $c = 0$, $\Gamma_{(t,0)}(u) \equiv \gamma_t(u)$, and so $\hat n(u)$ coincides with the normal vector $n(u)$ of $\gamma(u)$. This vector is precisely $-N(u,0)$, due to the choice of orientation for the Gauss map; see Remark \ref{rem:Gaussmap}. We will now consider the case $c \neq 0$. By continuity, we just need to prove that the product $\langle \hat n, N \rangle$ does not vanish. This is clear: indeed, the horizontal plane $\{x_3 = c\}$ is spanned by the tangent vector $\Gamma_{(t,c)}'$ and the normal unit vector $\hat n$. Moreover, it holds $\langle \Gamma_{(t,c)}', N\rangle = 0$. So, if $\langle N, \hat n\rangle$ were also zero at some point $(u_0,v_0)$, it would follow that $N(u_0,v_0)$ is vertical. This leads to a contradiction, as we chose $t$ small enough so that $\Sigma_t^{(2)}$ meets the plane $\{x_3 = c\}$ transversally. This shows that the level curves $\Gamma_{(t,c)}$ have negative normal curvature, and complete the proof of the proposition.
\end{proof}

\subsection*{Proof of Theorem \ref{thm:main}}
Let $\gamma(u)$ be any ribbon and consider the family of ribbons $\gamma_t$ in Definition \ref{def:gammat}. According to Propositions \ref{pro:curvaturapositiva} and \ref{pro:curvaturanegativa}, for sufficiently small values of $t$, the corresponding disks $\Sigma_{\gamma_t} \subset \mathbb{B}^3$ are saddle. Moreover, by Proposition \ref{pro:psiFB}, they are free boundary in $\mathbb{B}^3$. This completes the proof of Theorem \ref{thm:main}.

\bibliographystyle{amsalpha}

\vskip 0.2cm

\noindent Alberto Cerezo

\noindent Departamento de Geometría y Topología \\ Universidad de Granada (Spain) \\ \\
Instituto de Matemáticas IMUS \\ Departamento de Matemática Aplicada I \\ Universidad de Sevilla (Spain)

\noindent  e-mail: {\tt cerezocid@ugr.es}
\end{document}